\documentclass{{amsart}}
\setlength{\textheight}{43pc}
\setlength{\textwidth}{28pc}
\usepackage{amsmath, amssymb, amsthm}
\usepackage{enumerate}

\newtheorem{lemma}{Lemma}[section]
\newtheorem{proposition}{Proposition}[section]
\newtheorem*{theorema}{Theorem A}
\newtheorem*{theoremb}{Theorem B}
\newcommand{\taf}{{\hskip 5pt} $\blacksquare$
                  \renewcommand{\qedsymbol}{}}
\begin{document}
\subjclass{42B30,42B35}
\title{From dyadic  $\Lambda_{\alpha}$ to $\Lambda_{\alpha}$}

\author{Wael Abu-Shammala}
\address{Department of Mathematics\\ Indiana University\\
Bloomington IN 47405} \email{wabusham@indiana.edu}
\author{Alberto Torchinsky}
\address{Department of Mathematics\\ Indiana University\\
Bloomington IN 47405} \email{torchins@indiana.edu}
\begin{abstract} In this paper we show how to compute the
$\Lambda_{\alpha}$ norm , $\alpha\ge 0$, using the dyadic grid. This
result is a consequence of the description of the Hardy spaces
$H^p(R^N)$ in terms of dyadic and special atoms.
\end{abstract}
\maketitle

Recently, several novel methods for computing  the BMO norm of a
function $f$ in two dimensions  were discussed in \cite{LV}. Given
its importance, it is  also of interest to explore the possibility
of computing the norm of a BMO function, or more generally  a
function in the Lipschitz class $\Lambda_{\alpha}$, using the
dyadic grid in $R^N$. It turns out that the BMO question is
closely related to that of approximating functions in the Hardy
space $H^1(R^N)$ by the Haar system. The approximation in
$H^1(R^N)$ by affine systems was proved in \cite{HB}, but this
result does not apply to the Haar system. Now, if $H^A(R)$ denotes
the closure of the Haar system in $H^1(R)$, it is not hard to see
that the distance $d(f,H^A)$ of $f\in H^1(R)$ to $H^A$ is $\sim
\big|\int_0^{\infty}f(x)\,dx\big|$, see \cite{AST}. Thus, neither
dyadic atoms suffice to describe the Hardy spaces, nor the
evaluation of the norm in BMO can be reduced to a straightforward
computation using the dyadic intervals. In this paper we address
both of these issues. First, we give a characterization of the
Hardy spaces $H^p(R^N)$ in terms of dyadic and special atoms, and
then, by a duality argument, we show how to compute the norm in
$\Lambda_{\alpha}(R^N)$, $\alpha\ge 0$, using the dyadic grid.

We begin by introducing some notations.  Let ${\mathcal J}$ denote a
family of cubes $Q$ in $R^N$, and ${\mathcal P}_d$ the collection of
polynomials in $R^N$ of degree less than or equal to $d$. Given
$\alpha\ge 0$, $Q\in {\mathcal J}$, and a locally integrable
function $g$, let $p_Q(g)$ denote the unique polynomial in
${\mathcal P}_{[\alpha]}$ such that $[g-p_Q(g)]\,\chi_Q$ has
vanishing moments up to order $[\alpha]$.

For a locally square-integrable function  $g$, we consider the
maximal function $M^{\sharp, 2}_{\alpha,{\mathcal J}}g(x)$ given
by
\[M^{\sharp, 2}_{\alpha,{\mathcal J}}g(x)=\sup_{x\in Q,\, Q\in {\mathcal J}}
 \frac{1}{|Q|^{\alpha/N}}\bigg(\frac{1}{|Q|} \int_Q
 |g(y)-p_Q(g)(y)|^2\,dy
 \bigg)^{1/2}\,.
 \]

The Lipschitz space $\Lambda_{\alpha, {\mathcal J}}$ consists of
those  functions $g$ such that
 $M^{\sharp,2}_{\alpha, {\mathcal J}}g$ is in $L^{\infty}$,
 $\|g\|_{\Lambda_{\alpha, {\mathcal J}}}=
 \|M^{\sharp,2}_{\alpha, {\mathcal J}}g\|_{\infty}$;
  when the family in question contains all cubes in $R^N$, we
simply omit the subscript ${\mathcal J}$. Of course, $\Lambda_0 =
{\rm BMO}$.

Two other families, of dyadic nature, are of interest to us.
Intervals in $R$ of the form $I_{n,k}=[\, (k-1) 2^n, k2^n]$, where
$k$ and $n$ are arbitrary integers, positive, negative or $0$, are
said to be dyadic. In $R^N$, cubes which are the product of dyadic
intervals of the same length, i.e., of the form
$Q_{n,k}=I_{n,k_1}\times\cdots\times I_{n,k_N}$, are called dyadic,
and the collection of all such cubes is denoted ${\mathcal D}$.

There is also the family ${\mathcal D}_0$. Let $I_{n,k}'=[(k-1)2^n,
(k+1)2^n]$, where $k$ and $n$ are arbitrary integers. Clearly
$I_{n,k}'$ is dyadic
 if $k$ is odd, but not if $k$ is even. Now, the collection
 $\{I_{n,k}':n,k\ {\rm integers}\}$ contains all dyadic intervals
 as well as the shifts $[(k-1)2^n+2^{n-1}, k\,2^n+2^{n-1}]$ of the dyadic
 intervals by their half length.
In $R^N$,  put ${\mathcal
D}_0=\{Q_{n,k}':Q_{n,k}'=I_{n,k_1}'\times\cdots\times
 I_{n,k_N}'\}$;  $Q_{n,k}'$ is
called a special cube. Note that
  ${\mathcal D}_0$ contains ${\mathcal  D}$ properly.

Finally, given $I_{n,k}'$, let $I^{\,'L}_{n,k}=[(k-1)2^n,k2^n]$,
and $I^{\,'R}_{n,k}=[k2^n,(k+1)2^n]$. The $2^N$ subcubes of
$Q_{n,k}'=I_{n,k_1}'\times\cdots\times
 I_{n,k_N}'$ of the form $I_{n,k_1}^{\,'S_1}\times\cdots
 \times I_{n,k_N}^{\,'S_N}$, $S_j=L$ or $R$, $1\le j\le N$,
are called the dyadic subcubes of $Q_{n,k}'$.

Let $Q_0$ denote the special cube $[-1,1]^N$. Given $\alpha\ge 0$,
we construct a family ${\mathcal S_{\alpha}}$ of piecewise
polynomial splines in $L^2(Q_0)$ that will be useful in
characterizing $\Lambda_{\alpha}$. Let $A$ be the subspace of
$L^2(Q_0)$ consisting of all functions with vanishing moments up to
order $[\alpha]$ which coincide with a polynomial in ${\mathcal
P}_{[\alpha]}$  on each of the $2^{N}$ dyadic subcubes of $Q_0$. $A$
is a finite dimensional subspace of $L^2(Q_0)$, and, therefore, by
the Graham-Schmidt orthogonalization process, say, $A$ has an
orthonormal basis in $L^2(Q_0)$ consisting of functions
$p^1,\ldots,p^M$ with vanishing moments up to order $[\alpha]$,
which coincide with a polynomial in ${\mathcal P}_{[\alpha]}$  on
each dyadic subinterval of $Q_0$. Together with each $p^L$ we also
consider all dyadic dilations and integer translations given by
\[p_{n,k,\alpha}^L(x)=2^{n(N+\alpha)}p^L(2^nx_1+k_1,\dots,2^nx_N+k_N)\,,
\quad 1\le L\le M\,,\] and let \[{\mathcal
S_{\alpha}}=\{p_{n,k,\alpha}^L : n,k\ {\rm integers},\, 1\le L\le
M\}\,.\]

Our first result shows how the dyadic grid can be used to compute
the norm in $\Lambda_{\alpha}$.
\begin{theorema} Let $g$ be  a
locally square-integrable function and $\alpha \ge 0$. Then, $g\in
\Lambda_{\alpha}$ if, and only if, $g\in \Lambda_{\alpha,{\mathcal
D}}$ and $A_{\alpha}(g)=\sup_{p\in {\mathcal
S}_{\alpha}}\big|\langle g, p \rangle\big|<\infty$. Moreover,
\[\|g\|_{\Lambda_{\alpha}}\sim\|g\|_{\Lambda_{\alpha,{\mathcal D}}} +
A_{\alpha}(g)\,. \]
\end{theorema}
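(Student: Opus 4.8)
The plan is to prove the norm equivalence directly, reducing everything to a Pythagorean identity on special cubes, rather than routing through the atomic decomposition of $H^p$ and a duality argument. The easy direction, $\|g\|_{\Lambda_{\alpha,\mathcal D}}+A_{\alpha}(g)\lesssim\|g\|_{\Lambda_{\alpha}}$, I would dispatch first: since $\mathcal D$ is a subfamily of all cubes, $M^{\sharp,2}_{\alpha,\mathcal D}g\le M^{\sharp,2}_{\alpha}g$ pointwise, which gives the first term; and since each $p^L_{n,k,\alpha}$ is supported on a special cube $Q'$ and has vanishing moments up to order $[\alpha]$, I may replace $g$ by $g-p_{Q'}(g)$ in the pairing and apply Cauchy--Schwarz, so that $|\langle g,p^L_{n,k,\alpha}\rangle|\lesssim\|g\|_{\Lambda_{\alpha}}$ once the dilation factor $2^{n(N+\alpha)}$ is accounted for.

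For the reverse inequality I would first record the geometric covering fact that every cube $Q\subset R^N$ sits inside a special cube $Q'\in\mathcal D_0$ with $|Q'|\le C_N|Q|$; this follows coordinatewise from the half-overlapping structure of the intervals $I_{n,k}'$. Because $p_Q(g)$ minimizes the $L^2(Q)$ distance to $\mathcal P_{[\alpha]}$, one has $\int_Q|g-p_Q(g)|^2\le\int_{Q'}|g-p_{Q'}(g)|^2$, and the volume factors are comparable, so $\|g\|_{\Lambda_{\alpha}}\sim\|g\|_{\Lambda_{\alpha,\mathcal D_0}}$. It therefore suffices to bound the oscillation over a single special cube $Q'$ by $\|g\|_{\Lambda_{\alpha,\mathcal D}}+A_{\alpha}(g)$.

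The heart of the argument is a Pythagorean identity in $L^2(Q')$. Let $W=\mathcal P_{[\alpha]}|_{Q'}$, let $V$ be the larger space of functions that are polynomials of degree $\le[\alpha]$ on each of the $2^N$ dyadic subcubes $Q_j$ of $Q'$, and let $U=V\ominus W$ be the orthogonal complement. The orthogonal projections satisfy $P_Wg=p_{Q'}(g)$ and $(P_Vg)|_{Q_j}=p_{Q_j}(g)$, so $g-P_Wg=(g-P_Vg)+P_Ug$ with the two summands orthogonal, whence
\[\int_{Q'}|g-p_{Q'}(g)|^2=\sum_{j}\int_{Q_j}|g-p_{Q_j}(g)|^2+\|P_Ug\|_{L^2(Q')}^2\,.\]
The key observation, and the step I expect to carry the real content, is that $U$ is exactly the transplant of the spline space $A$ to $Q'$: a function lies in $U$ precisely when it is piecewise polynomial on the dyadic subcubes and has vanishing moments up to order $[\alpha]$, which is the defining property of $A$. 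Hence $\|P_Ug\|^2=\sum_{L=1}^M|\langle g,u^L\rangle|^2$ for the transplanted orthonormal basis $u^L$, and a change of variables identifies each $u^L$ with a fixed scalar multiple of $p^L_{n,k,\alpha}$, the normalization exponent $2^{n(N+\alpha)}$ being chosen precisely so that the powers of $|Q'|$ cancel. Dividing the displayed identity by $|Q'|^{1+2\alpha/N}$ then expresses the special-cube Campanato quantity as a combination of the $2^N$ dyadic subcube quantities (each $\le\|g\|_{\Lambda_{\alpha,\mathcal D}}$, since those subcubes lie in $\mathcal D$) and the squared pairings $|\langle g,p^L_{n,k,\alpha}\rangle|^2$ (each $\le A_{\alpha}(g)^2$). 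Taking the supremum over all special cubes gives $\|g\|_{\Lambda_{\alpha,\mathcal D_0}}^2\lesssim\|g\|_{\Lambda_{\alpha,\mathcal D}}^2+A_{\alpha}(g)^2$, which combined with the covering reduction completes the proof. The only delicate bookkeeping is tracking the normalization constants in the change of variables; the geometric and functional-analytic inputs are otherwise routine.
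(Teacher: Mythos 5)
Your proof is correct, but it takes a genuinely different route from the paper. The paper proves Theorem A by duality: it first shows that every $L^2$ $p$-atom splits into $2^N$ dyadic atoms plus a special atom (Lemma 1.1), deduces Theorem B ($H^p=H^p_{\mathcal D}+H^p_{\mathcal S_{\alpha}}$), and then dualizes, identifying $(H^p_{\mathcal S_{\alpha}})^*$ with $\{g: A_{\alpha}(g)<\infty\}$ via a representation argument on the exhausting cubes $Q_n$. You stay entirely on the $\Lambda_{\alpha}$ side: the covering fact that every cube lies in a special cube of comparable volume reduces matters to $\Lambda_{\alpha,{\mathcal D}_0}$ (this is the paper's Proposition 2.1, which you prove directly by the half-overlap of the $I_{n,k}'$ rather than as a corollary of Theorem A), and your Pythagorean decomposition $g-p_{Q'}(g)=(g-P_Vg)+P_Ug$ on a special cube is precisely the $L^2$-dual mirror of the paper's atom splitting in Lemma 1.1. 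The key identifications all check out: $p_Q(g)$ is by definition the $L^2(Q)$-orthogonal projection onto ${\mathcal P}_{[\alpha]}$, so $P_Wg=p_{Q'}(g)$ and $(P_Vg)|_{Q_j}=p_{Q_j}(g)$; $U=V\ominus W$ is the affine transplant of $A$ because orthogonality to ${\mathcal P}_{[\alpha]}$ is affine-invariant; each dyadic subcube of a special cube is a genuine cube of ${\mathcal D}$ (both halves $[(k-1)2^n,k2^n]$ and $[k2^n,(k+1)2^n]$ of $I_{n,k}'$ are dyadic), which is what lets you control those terms by $\|g\|_{\Lambda_{\alpha,{\mathcal D}}}$; and the exponent $N+\alpha$ in $p^L_{n,k,\alpha}$ makes the powers of $|Q'|$ cancel exactly in $\|P_Ug\|^2/|Q'|^{1+2\alpha/N}$. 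What your route buys is a self-contained, elementary proof of Theorem A that never mentions $H^p$ or atomic decompositions; what it gives up is Theorem B itself and the operator-extension corollaries, which are of independent interest and genuinely require the atomic machinery.
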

Furthermore, it is also true,  and the proof is given in
Proposition 2.1 below,  that
$\|g\|_{\Lambda_{\alpha}}\sim\|g\|_{\Lambda_{{\alpha},{\mathcal
D}_0}}$.  However, in this simpler formulation,  the tree
structure of the cubes in ${\mathcal D}$ has been lost.

 The proof of Theorem A relies on a close investigation of the predual
of $\Lambda_{\alpha}$,  namely, the Hardy space $H^p(R^N)$ with
$0<p= (\alpha +N)/N\le 1$. In the process  we  characterize $H^p$ in
terms of simpler subspaces: $H^{p}_{\mathcal D}$,  or dyadic $H^p$,
and $H^p_{{\mathcal S}_{\alpha}}$,   the space generated by the
special atoms in ${\mathcal S}_{\alpha}$.   Specifically, we  have
\begin{theoremb}Let $0<p\le 1$, and  $\alpha=N(1/p-1)$.
We then have
 \[H^p=H^p_{\mathcal D} +
H^p_{\mathcal S_{\alpha}},\] where the sum is understood in the
sense of quasinormed Banach spaces.
\end{theoremb}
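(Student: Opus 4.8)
The plan is to establish the two inclusions separately. The inclusion $H^p_{\mathcal D} + H^p_{\mathcal S_\alpha} \subseteq H^p$ is the easy direction: a dyadic atom is already a genuine $(p,2)$-atom, so $H^p_{\mathcal D}$ embeds continuously in $H^p$, and a direct computation shows that each $p^L_{n,k,\alpha}$ is the fixed multiple $2^{N(1/p-1/2)}$ of a $(p,2)$-atom supported on the special cube of side $2^{1-n}$ (here one uses $N+\alpha=N/p$ and $\|p^L\|_2=1$), so $H^p_{\mathcal S_\alpha}$ embeds continuously as well. Combining these with the $p$-subadditivity of $\|\cdot\|_{H^p}^p$ over any splitting $f=f_1+f_2$ and taking the infimum yields $\|f\|_{H^p}\lesssim \|f\|_{H^p_{\mathcal D}+H^p_{\mathcal S_\alpha}}$.

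For the essential inclusion $H^p\subseteq H^p_{\mathcal D}+H^p_{\mathcal S_\alpha}$, I would start from an atomic decomposition $f=\sum_j \lambda_j a_j$ with $(p,2)$-atoms $a_j$ on cubes $Q_j$ and $\sum_j|\lambda_j|^p\lesssim\|f\|_{H^p}^p$, and split each atom by a uniform procedure. The geometric input is a covering lemma: if $Q$ has side length $\ell$ and $n$ is chosen minimal with $2^n\ge\ell$, then $Q$ is contained in a special cube $Q'\in\mathcal D_0$ of side $2^{n+1}$, so that $|Q'|\le 4^N|Q|$; crucially, the $2^N$ dyadic subcubes of $Q'$ are honest members of $\mathcal D$, since $I^{\,'L}_{n,k}=I_{n,k}$ and $I^{\,'R}_{n,k}=I_{n,k+1}$.

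Given an atom $a$ on $Q\subset Q'$, let $R_1,\dots,R_{2^N}$ be the dyadic subcubes of $Q'$ and set
\[ c=\sum_i p_{R_i}(a)\,\chi_{R_i},\qquad b=a-c=\sum_i\big(a\,\chi_{R_i}-p_{R_i}(a)\big). \]
Each summand $a\,\chi_{R_i}-p_{R_i}(a)$ is supported on the dyadic cube $R_i$, has vanishing moments up to order $[\alpha]$ by the defining property of $p_{R_i}$, and satisfies $\|a\,\chi_{R_i}-p_{R_i}(a)\|_2\lesssim\|a\|_2\lesssim|R_i|^{1/2-1/p}$, because $g\mapsto p_{R_i}(g)$ is an $L^2(R_i)$-contraction and $|R_i|\sim|Q|$; hence $b$ is a sum of $2^N$ fixed multiples of dyadic atoms and $\|b\|_{H^p_{\mathcal D}}\lesssim 1$. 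On the other hand $c$ is a polynomial of degree $\le[\alpha]$ on each dyadic subcube of $Q'$, and since $a$ has vanishing moments over $Q'$ so does $c$; therefore $c$ lies in the rescaled copy of the spline space $A$ attached to $Q'$, i.e.\ in the span of the $M$ functions $p^L_{n,k,\alpha}$ indexed by $Q'$. Writing $c=\sum_L\gamma_L\,p^L_{n,k,\alpha}$ and using that these are $L^2$-orthogonal with $\|p^L_{n,k,\alpha}\|_2\sim|Q'|^{1/2-1/p}$ together with $\|c\|_2\lesssim\|a\|_2\lesssim|Q'|^{1/2-1/p}$ gives $\sum_L|\gamma_L|^2\lesssim 1$, whence $\|c\|_{H^p_{\mathcal S_\alpha}}\lesssim 1$ since $M$ is fixed. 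Applying this splitting to every atom and summing, $f_1=\sum_j\lambda_j b_j$ and $f_2=\sum_j\lambda_j c_j$ provide the desired decomposition with $\|f_1\|_{H^p_{\mathcal D}}^p+\|f_2\|_{H^p_{\mathcal S_\alpha}}^p\lesssim\sum_j|\lambda_j|^p\lesssim\|f\|_{H^p}^p$.

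The hard part will be the geometric and structural heart of the argument: an arbitrary atom may sit at any position relative to the dyadic grid, and the whole scheme rests on capturing it inside a special cube whose dyadic subcubes are genuinely in $\mathcal D$, and then recognizing that the polynomial leftover $c$ is \emph{exactly} the rescaled spline space $A$ rather than merely close to it. Once the normalization of $\mathcal S_\alpha$ is seen to match the $(p,2)$-atom normalization, the coefficient bounds are a short orthogonality computation, so the covering lemma together with the identification $c\in\operatorname{span}\{p^L_{n,k,\alpha}\}$ is where the real work lies; the convergence of the series in each component is routine given the $\ell^p$ control on the $\lambda_j$.
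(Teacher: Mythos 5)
Your proposal is correct and follows essentially the same route as the paper: both reduce Theorem B to splitting each $(p,2)$-atom over the dyadic subcubes of the smallest special cube containing its defining cube, subtracting the polynomial projections $p_{R_i}(a)\chi_{R_i}$ to produce $2^N$ bounded multiples of dyadic atoms, and identifying the piecewise-polynomial remainder (with its inherited vanishing moments) as a bounded combination of the splines $p^L_{n,k,\alpha}$. The only differences are cosmetic: the paper first treats the case $Q=Q_0$ with explicit orthonormal bases $e^i_j$ and then rescales, whereas you work directly at scale via the projections $p_{R_i}$ and an orthogonality estimate for the coefficients.
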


The paper is organized as follows. In Section 1 we show that
individual $H^p$  atoms can be written as a superposition of dyadic
and special atoms; this fact may be thought of as an extension of
the one-dimensional result of Fridli concerning
$L^{\infty}\,1$-\,atoms, see \cite{fridli} and \cite{AST}.  Then, we
prove Theorem B. In Section 2 we discuss how to pass from
$\Lambda_{\alpha,{\mathcal D}}$, and $\Lambda_{\alpha,{\mathcal
D}_0}$, to the  Lipschitz space $\Lambda_{\alpha}$.
\section{Characterization of the Hardy spaces $H^p$}
We adopt the atomic definition of the Hardy spaces
 $H^p$, $0<p\le 1$, see \cite{GCRdF} and \cite {torchinsky}.
 Recall that a compactly supported function $a$
 with $[N(1/p-1)]$  vanishing
moments is an $L^{2}$ $p\,$-atom with defining cube $Q$  if
supp$(a)\subseteq Q$, and
\[|Q|^{1/p}\left(\frac1{|Q|} \int_Q|\,a(x)\,|^2 dx\right)^{1/2}\le 1\,.\]
 The Hardy space $H^p(R^N)=H^p$
 consists of those distributions $f$ that can be written
 as
$f=\sum \lambda_ja_j$, where the $a_j$'s are $H^p$ atoms, $\sum
|\lambda_j|^p<\infty$, and the convergence is in the sense of
distributions as well as in $H^p$.  Furthermore,
 \[ \|f\|_{H^p}\sim \inf\Big(\sum|\lambda_j|^p\Big)^{1/p}\,,\]
 where the  infimum is taken over all possible atomic decompositions
 of $f$. This last expression has traditionally been called the atomic $H^p$
 norm of $f$.

Collections of atoms with special properties can  be used to gain
a better understanding of the Hardy spaces. Formally, let
$\mathcal{A}$
 be a non-empty subset of $L^2$ $p\,$-atoms in the unit ball of $H^p$.
 The atomic space $H^p_{\mathcal A}$
 spanned by $\mathcal{A}$ consists of  those $\varphi$ in $H^p$
   of the form
\[\varphi=\sum \lambda_ja_j\,,\quad a_j\in
\mathcal{A}\,, \sum|\lambda_j|^p<\infty\,.\] It is readily seen
that, endowed with the atomic norm
\[\|\varphi\|_{H^p_{\mathcal A}}=\inf\Big\{\Big(\sum|\lambda_j|^p\Big)^{1/p}:
\varphi=\sum\lambda_j\,a_{j} \,,a_{j}\in {\mathcal A}\,\Big\}\,,\]
$H^p_{\mathcal A}$ becomes a complete quasinormed space. Clearly,
$H^p_{{\mathcal A}}\subseteq H^p$, and, for $f\in H^p_{{\mathcal
 A}}$, $\|f\|_{H^p} \le \|f\|_{H^p_{\mathcal A}}$.

 Two families are of particular interest to us. When ${\mathcal
 A}$ is the collection of all $L^2$ $p\,$-atoms whose defining cube is
 dyadic,
  the resulting space is $H^p_{\mathcal D}$, or dyadic $H^p$.
 Now, although $\|f\|_{H^p}\le  \|f\|_{H^p_{\mathcal D}}$, the two
quasinorms are not equivalent on $H^p_{\mathcal D}$. Indeed, for $p=
1$ and $N=1$, the functions
\[f_n(x)=2^{n} [\chi_{[1-2^{-n},1]}(x)-\chi_{[1,1+2^{-n}]}(x)]\,,\]
satisfy  $\|f_n\|_{H^1}= 1$, but $\|f_n\|_{H^1_{\mathcal D}}\sim
|n|$ tends to infinity with $n$.

Next, when ${\mathcal S}_{\alpha}$ is the family of piecewise
polynomial splines constructed above
 with $\alpha=N(1/p-1)$, in analogy with
the one-dimensional results in \cite {desouza} and \cite{AST},
$H^p_{{\mathcal S}_{\alpha}}$ is referred to as the space
generated by special atoms.

We are now ready to  describe $H^p$  atoms as a superposition of
dyadic and special atoms.
\begin{lemma} Let $a$ be  an $L^2$ $p\,$-atom with defining cube
$Q$,  $0<p\le 1$,  and  $\alpha=N(1/p-1)$. Then
 $a$ can be written as a linear combination of
 $2^N$ dyadic
atoms $a_{i}$, each supported in one of the dyadic subcubes of the
smallest special cube $Q_{n,k}$ containing $Q$, and a special atom
$b$ in ${\mathcal S}_{\alpha}$. More precisely,
$a(x)=\sum_{i=1}^{2^N}d_i\, a_{i}(x)+\sum_{L=1}^M
c_L\,p^L_{-n,-k,\alpha}(x)$, with $|d_i|\,,\,|c_L|\le c$.
 \end{lemma}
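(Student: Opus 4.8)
The plan is to peel the atom $a$ apart over the dyadic subcubes of the smallest special cube containing its support, to correct each piece by a polynomial so that it becomes a genuine dyadic atom, and then to collect the correcting polynomials into a single spline that the construction of ${\mathcal S}_{\alpha}$ is designed to absorb. First, let $Q'=Q_{n,k}$ be the smallest special cube containing $Q$, with dyadic subcubes $Q_1,\dots,Q_{2^N}$. The fact underlying everything is geometric: because the special intervals $I'_{n,k}$ comprise the dyadic intervals together with their half-length shifts, every cube is contained in a special cube of comparable size, so $|Q'|\sim|Q|$ and hence $|Q_i|=2^{-N}|Q'|\sim|Q|$ for each $i$. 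Since $\mathrm{supp}(a)\subseteq Q\subseteq Q'=\bigcup_i Q_i$, I can split $a=\sum_{i=1}^{2^N} a\,\chi_{Q_i}$, and on each $Q_i$ subtract the polynomial $p_{Q_i}(a)\in{\mathcal P}_{[\alpha]}$, setting $a_i^0=(a-p_{Q_i}(a))\chi_{Q_i}$. By the defining property of $p_{Q_i}(a)$ this piece has vanishing moments up to order $[\alpha]$ and is supported in the dyadic cube $Q_i$, so a suitable normalization $a_i=a_i^0/d_i$ is a dyadic atom.

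Second, I would treat the remainder $P=a-\sum_i a_i^0=\sum_i p_{Q_i}(a)\chi_{Q_i}$. It is supported in $Q'$, agrees with a polynomial of ${\mathcal P}_{[\alpha]}$ on each dyadic subcube $Q_i$, and has vanishing moments up to order $[\alpha]$, since both $a$ and each $a_i^0$ do. Pulling $Q'$ back to $Q_0=[-1,1]^N$ by the affine map $\phi$ that sends dyadic subcubes to dyadic subcubes, and noting that having vanishing moments up to order $[\alpha]$ is an affine-invariant condition, $P\circ\phi$ lands in the finite-dimensional space $A$. Expanding it in the orthonormal basis $p^1,\dots,p^M$ and transporting back by $\phi^{-1}$ expresses $P$ as $\sum_{L=1}^M c_L\,p^L_{-n,-k,\alpha}$, because $p^L_{-n,-k,\alpha}$ is exactly $p^L$ rescaled to $Q'$. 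This produces the claimed identity $a=\sum_i d_i a_i+\sum_L c_L\,p^L_{-n,-k,\alpha}$.

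Third come the uniform bounds. For the $d_i$: the orthogonal projection onto ${\mathcal P}_{[\alpha]}$ is bounded on $L^2$ with a constant depending only on $N$ and $[\alpha]$ (it is scale-invariant once $Q_i$ is rescaled to the unit cube), so $\|a_i^0\|_2\le C\|a\,\chi_{Q_i}\|_2\le C\|a\|_{L^2(Q)}\le C|Q|^{1/2-1/p}$ by the atom size condition; normalizing, $d_i=|Q_i|^{1/p-1/2}\|a_i^0\|_2\le C(|Q_i|/|Q|)^{1/p-1/2}$, which is $\le c$ because $|Q_i|\sim|Q|$. For the $c_L$: the $p^L_{-n,-k,\alpha}$ are mutually orthogonal, so $|c_L|\le\|P\|_2/\|p^L_{-n,-k,\alpha}\|_2$ by Cauchy--Schwarz, and the same projection bound gives $\|P\|_2\le C|Q|^{1/2-1/p}$.

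The step where I expect to have to be most careful — and the reason the identity $\alpha=N(1/p-1)$ is essential — is matching the two normalizations in the bound for $c_L$: with $Q'$ of side $2^{n+1}$ one finds $\|p^L_{-n,-k,\alpha}\|_2\sim 2^{-n(N/2+\alpha)}$, while $|Q|^{1/2-1/p}\sim 2^{-n(N/2+\alpha)}$ as well, the exponents coinciding precisely because $1/p-1/2=1/2+\alpha/N$. Hence $|c_L|\le c$ uniformly in the atom and in the scale. The only genuinely delicate input is the geometric comparability $|Q'|\sim|Q|$; everything else is bookkeeping with scale-invariant quantities, and the resulting constants depend only on $N$, $p$, and $[\alpha]$.
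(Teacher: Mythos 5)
Your proposal is correct and follows essentially the same route as the paper: split $a$ over the $2^N$ dyadic subcubes of the smallest special cube containing $Q$ (which has comparable measure by the choice $2^{n-1}\le\ell<2^n$), subtract on each subcube the $L^2$-orthogonal projection onto ${\mathcal P}_{[\alpha]}$ to produce the dyadic atoms, and recognize the piecewise-polynomial remainder with vanishing moments as an element of the (rescaled) space $A$, expanded in the basis $p^L$. The only differences are presentational — you work directly at the general scale where the paper first treats $Q_0$ and then dilates, and you phrase the projection via $p_{Q_i}(a)$ and bound the $c_L$ by orthogonality rather than via Bessel's inequality — and your scaling check $1/p-1/2=1/2+\alpha/N$ matches the paper's normalization of $p^L_{n,k,\alpha}$.
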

\begin{proof}
Suppose first that the defining cube of $a$ is  $Q_0$, and let
$Q_1,\ldots,Q_{2^N}$ denote the dyadic subcubes of $Q_0$.
Furthermore, let $\{e^1_i,\ldots, e^{M}_i\}$ denote an orthonormal
basis of   the subspace $A_i$ of $L^2(Q_i)$ consisting of
polynomials in ${\mathcal P}_{[\alpha]}$, $1\le i\le 2^N$. Put
\[\alpha_i(x)=a(x)\chi_{Q_i}(x)-\sum_{j=1}^M\langle a\chi_{Q_i},
e_j^i\rangle\,e_j^i(x)\,,\quad 1\le i\le 2^N\,,\] and observe that
$\langle \alpha_i, e_j^i\rangle =0$ for $1\le j\le M$. Therefore,
$\alpha_i$ has $[\alpha]$ vanishing moments, is supported in $Q_i$,
and
\[\|\alpha_i\|_2\le \|a\chi_{Q_i}\|_2+\sum_{j=1}^M\|a\chi_{Q_i}\|_2\le
(M+1)\,\|a\chi_{Q_i}\|_2\,.
\]
So, \[ a_i(x)=\frac{2^{N(1/2\,-1/p)}}{M+1}\,\alpha_i(x)\,,\quad 1\le
i\le N\,,\]
 is an $L^2$
$p\,$-\,dyadic atom. Finally, put
\[b(x)=a(x)-\frac{M+1}{2^{N(1/2\,-1/p)}}\sum_{i=1}^{2^N}a_i(x)\,.\]
Clearly $b$ has  $[\alpha]$ vanishing moments, is supported in
$Q_0$, coincides with a polynomial in  ${\mathcal P}_{[\alpha]}$ on
each dyadic subcube of $Q_0$, and
\[ \|b\|^2_2\le \sum_{i=1}^{2^N}\sum_{j=1}^M |\langle a\chi_{Q_i},
e_j^i\rangle|^2\le M\,\|a\|_2^2\,.\] So, $b\in A$, and,
consequently, $b(x)=\sum_{L=1}^Mc_L\,p^L(x)$, where
\[|c_L|=|\langle b,
p^L\rangle|\le c\,,\quad 1\le L\le M\,.\]

In the general case, let  $Q$ be the defining cube of $a$,
side-length $Q=\ell$,
  and let $n$ and  $k=(k_1,\dots,k_N)$ be chosen so that
$2^{n-1}\le \ell <2^n$, and
\[Q\subset [(k_1-1)2^n,
(k_1+1)2^n]\times \cdots \times[(k_N-1)2^n, (k_N+1)2^n]\,.\] Then,
$(1/2)^N\le |Q|/2^{nN}<1$.

Now, given $x\in Q_0$, let $a'$ be the translation and dilation of
$a$ given by
\[a'(x)=2^{nN/p}a(2^{n}x_1-k_1,\ldots,2^{n}x_N-k_N)\,.
\]
Clearly, $[\alpha]$ moments of $a'$ vanish, and
\[\|a'\|_2=2^{nN/p}\,2^{-nN/2}\|a\|_2\le c\,|Q|^{1/p}|Q|^{-1/2}\|a\|_2
\le c\,.
\]
Thus, $a'$ is a multiple of an atom with defining cube $Q_0$. By
the first part of the proof,
\[a'(x)=\sum_{i=1}^{2^N}d_i\,a_i'(x)+\sum_{L=1}^M c_L\,p^L(x)\,,\quad x\in
Q_0\,.
\]
The support of each $a_i'$ is contained in one of the dyadic
subcubes of $Q_0$, and, consequently, there is a $k$ such that
\[a_i(x)=2^{-nN/p}a_i'(2^{-n}x_1-k_1,\ldots,2^{-n}x_N-k_N)
\]
$a_i$ is an $L^2 p$ -atom supported in one of the dyadic subcubes of
$Q$. Similarly for the $p_L$'s. Thus,
\[a(x)=\sum_i d_i\,a_i(x)+\sum_{L=1}^M c_L p^L_{-n,-k,N(1/p-1)}(x)\,,\]
and we have finished.
 \taf
\end{proof}
Theorem B follows  readily from  Lemma 1.1. Clearly, $H^p_{\mathcal
D}+H^p_{{\mathcal S}_{\alpha}} \hookrightarrow H^p$. Conversely, let
$f=\sum_j\lambda_j\,a_j$ be in $H^p$. By Lemma 1.1  each $a_j$ can
be written as a sum of dyadic and special atoms, and,  by
distributing the sum, we can write $f=f_d+f_s$, with $f_d$ in
$H^p_{\mathcal D}$, $f_s$ in $H^p_{{\mathcal S}_{\alpha}}$, and
\[\|f_d\|_{ H^p_{\mathcal D}}, \|f_s\|_{ H^p_{{\mathcal
S}_{\alpha}}} \le c\, \Big(\sum |\lambda_j|^p\Big)^{1/p}\,.\] Taking
the infimum over the decompositions of $f$ we get $
\|f\|_{H^p_{\mathcal D}+H^p_{{\mathcal S}_{\alpha}}}\le c\,
\|f\|_{H^p}$, and $H^p\hookrightarrow H^p_{\mathcal
D}+H^p_{{\mathcal S}_{\alpha}}$. This completes the proof.

The meaning of this decomposition is the following. Cubes in
${\mathcal D}$ are contained in one of the   $2^N$ non-overlapping
quadrants of $R^N$. To allow for the information carried by a dyadic
cube to be transmitted to an adjacent dyadic cube, they must be
connected. The $p^L_{n,k,\alpha}$'s channel information across
adjacent dyadic cubes which would otherwise remain disconnected. The
reader will have no difficulty in proving the quantitative version
of this observation: Let $T$ be a linear mapping defined on $H^p$,
$0<p\le 1$, that assumes values in a quasinormed Banach space $X$.
Then, $T$ is continuous if, and only if, the restrictions of $T$ to
$H^p_{\mathcal D}$ and $H^p_{{\mathcal S}_{\alpha}}$ are continuous.
\section{Characterizations of $\Lambda_{\alpha}$}
Theorem A  describes how to pass from $\Lambda_{\alpha,{\mathcal
D}}$ to $\Lambda_{\alpha}$, and we prove it next. Since
$(H^p)^*=\Lambda_{\alpha}$ and $(H^p_{\mathcal
D})^*=\Lambda_{\alpha, {\mathcal D}}$, from Theorem B it follows
readily that  $\Lambda_{\alpha}= \Lambda_{\alpha, {\mathcal D}}\cap
(H^p_{{\mathcal S}_{\alpha}})^*$, so it only remains to show that
$(H^p_{{\mathcal S}_{\alpha}})^*$ is characterized by the condition
$A_{\alpha}(g)<\infty$.

First note that if $g$ is a locally square-integrable function
with $A_{\alpha}(g)<\infty$ and
$f=\sum_{j,L}c_{j,L}\,p^L_{n_j,k_j,\alpha}$, since $0<p\le 1$,
\begin{align*}|\langle g,f\rangle| &\le \sum_{j,L}|c_{j,L}|
~|\langle g, p^L_{n_j,k_j,\alpha}\rangle|\\
&\le A_{\alpha}(g)\bigg[\sum_{j,L}|c_{j,L}|^p\bigg]^{1/p},
\end{align*}
and, consequently, taking the infimum over all atomic decompositions
of $f$ in ${H^p_{{\mathcal S}_{\alpha}}}$,  we get $g\in
(H^p_{{\mathcal S}_{\alpha}})^*$ and $\|g\|_{(H^p_{{\mathcal
S}_{\alpha}})^*}\le A_{\alpha}(g)$.

To prove the converse we proceed  as in \cite{APCAT}.  Let
$Q_n=[-2^n,2^n]^N$. We begin by observing that functions $f$ in
$L^2(Q_n)$ that have vanishing moments up to order $[\alpha]$ and
coincide with polynomials of degree $[\alpha]$ on the dyadic
subcubes of $Q_n$ belong to $H^p_{{\mathcal S}_{\alpha}}$ and
\[\|f\|_{H^p_{{\mathcal S}_{\alpha}}}\le |Q_n|^{1/p-1/2}\|f\|_2\,.\]
 Given $\ell\in (H^p_{{\mathcal S}_{\alpha}})^*$, for a fixed $n$ let us consider the
restriction of $\ell$ to the space of $L^2$ functions $f$ with
$[\alpha]$ vanishing moments that are supported in $Q_n$. Since
\[
|\ell(f)|\le \|\ell\| \,\|f\|_{H^p_{{\mathcal S}_{\alpha}}}\le
\|\ell\|\,|Q_n|^{1/p-1/2}\|f\|_2\,,
\]
 this restriction is
continuous with respect to the norm in $L^2$ and, consequently, it
can be extended to a continuous linear functional in $L^2$ and
represented as
\[\ell(f)=\int_{Q_n}f(x)\,g_n(x)\,dx\,,\]
where $ g_n\in L^2(Q_n)$ and satisfies $\|g_n\|_2\le
\|\ell\|\,|Q_n|^{1/p-1/2}$. Clearly, $g_n$ is uniquely determined in
$Q_n$ up to a polynomial $p_n$ in ${\mathcal P}_{[\alpha]}$.
Therefore,
\[g_n(x)-p_n(x) =g_m(x)-p_m(x)\,,\quad {\rm a.e. }
\ x\in Q_{\min(n,m)}\,.\]
Consequently, if
\[g(x)=g_n(x)-p_n(x)\,,\quad  x\in Q_n\,,\]
$g(x)$ is well defined a.e. and, if $f\in L^2$
 has $[\alpha]$  vanishing moments and is supported in $Q_n$,
  we have
 \begin{align*}
 \ell(f)& =\int_{R^N} f(x)\,g_n(x)\,dx\\
 &= \int_{R^N} f(x)\,
 [g_n(x)-p_n(x)]\,dx\\
 &=\int_{R^N} f(x)\,g(x)\,dx\,.
 \end{align*}
Moreover, since each  $2^{nN/p} p^L(2^n\cdot+k)$ is an $L^2$
$p$-atom, $1\le L\le M$,  it readily follows that
\begin{align*}
A_{\alpha}(g) &=\sup_{1\le L\le M}\sup_{n,k\in Z}|\langle g,
2^{-n/p}
p^L(2^n\cdot+k)\rangle|\\
&\le \|\ell\|\, \sup_L \|p^L\|_{H^p} \le \,\|\ell\|\,,
\end{align*}
and, consequently, $A_{\alpha}(g)\le\|\ell\|\,,$ and
$(H^p_{{\mathcal S}_{\alpha}})^*$ is the desired space. {\hskip 5pt}
$\blacksquare$

The reader will have no difficulty in showing that this result
implies the following: Let $T$ be a bounded linear operator from a
quasinormed space $X$ into $\Lambda_{\alpha,{\mathcal D}}$. Then,
$T$ is bounded from $X$ into   $\Lambda_{\alpha}$ if, and only if,
$A_{\alpha}(Tx)\le c\,\|x\|_X$ for every $x\in X$.

The process of averaging the translates of dyadic  BMO functions
leads to BMO,   and is an important tool in obtaining results in BMO
once they are known to be true in its dyadic counterpart, ${\rm
BMO}_d$, see \cite{garnettjones}. It is also known that BMO can be
obtained as the intersection of ${\rm BMO}_d$ and one of its shifted
counterparts,  see \cite{tm}. These results motivate our next
proposition, which essentially says that $g\in \Lambda_{\alpha}$ if,
and only if, $g\in \Lambda_{\alpha,{\mathcal D}}$ and $g$ is in the
Lipschitz class obtained from the shifted dyadic grid. Note that the
shifts involved in this class are in all directions parallel to the
coordinate axis and  depend on the side-length of the cube.
\begin{proposition}
${\Lambda_{\alpha}}=\Lambda_{\alpha, {\mathcal D}_0}$, and
$\|g\|_{\Lambda_{\alpha}}\sim \|g\|_{\Lambda_{\alpha, {\mathcal
D}_0}}$.
\end{proposition}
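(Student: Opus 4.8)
The inclusion $\mathcal{D}_0\subseteq\{\text{all cubes}\}$ gives at once $M^{\sharp,2}_{\alpha,\mathcal D_0}g(x)\le M^{\sharp,2}_{\alpha}g(x)$ for every $x$, and hence $\|g\|_{\Lambda_{\alpha,\mathcal D_0}}\le\|g\|_{\Lambda_\alpha}$. The plan is therefore to establish the reverse pointwise bound $M^{\sharp,2}_{\alpha}g(x)\le c\,M^{\sharp,2}_{\alpha,\mathcal D_0}g(x)$, which upon taking $L^\infty$ norms yields $\|g\|_{\Lambda_\alpha}\le c\,\|g\|_{\Lambda_{\alpha,\mathcal D_0}}$ and completes the equivalence.

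The crucial geometric fact, and the feature that distinguishes $\mathcal D_0$ from $\mathcal D$, is a covering property: every cube $Q\subset R^N$ is contained in a special cube $Q'\in\mathcal D_0$ of comparable size. First I would prove this in one dimension. Given an interval $J=[a,a+\ell]$ with $\ell\le 2^n$, choosing $k$ so that $(k-1)2^n\le a<k2^n$ forces $a+\ell<(k+1)2^n$, whence $J\subseteq I'_{n,k}$. Taking products coordinatewise, any cube of side $\ell$ with $2^{n-1}<\ell\le 2^n$ lies in a special cube $Q'$ of side $2^{n+1}$, and consequently $2^N\le|Q'|/|Q|<4^N$. It is precisely this overlapping nature of the $I'_{n,k}$ that $\mathcal D$ lacks.

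With the covering in hand, fix $x$ and an arbitrary cube $Q\ni x$, and let $Q'\in\mathcal D_0$ be the comparable special cube just produced, so that $x\in Q\subseteq Q'$. Since $p_Q(g)$ is exactly the $L^2(Q)$-orthogonal projection of $g$ onto $\mathcal P_{[\alpha]}$, it is the best $L^2(Q)$ approximant; comparing it with the competitor $p_{Q'}(g)$ gives $\int_Q|g-p_Q(g)|^2\le\int_Q|g-p_{Q'}(g)|^2\le\int_{Q'}|g-p_{Q'}(g)|^2$. Using $|Q|\sim|Q'|$ to absorb the normalizing factors $|Q|^{-\alpha/N}$ and $|Q|^{-1}$, I obtain
\[\frac{1}{|Q|^{\alpha/N}}\Big(\frac1{|Q|}\int_Q|g-p_Q(g)|^2\Big)^{1/2}\le c\,\frac{1}{|Q'|^{\alpha/N}}\Big(\frac1{|Q'|}\int_{Q'}|g-p_{Q'}(g)|^2\Big)^{1/2}\le c\,M^{\sharp,2}_{\alpha,\mathcal D_0}g(x),\]
the last inequality holding because $x\in Q'\in\mathcal D_0$, so the middle quantity is one of the terms in the supremum defining $M^{\sharp,2}_{\alpha,\mathcal D_0}g(x)$. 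Taking the supremum over all $Q\ni x$ yields the desired pointwise estimate.

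The only genuine ingredient is the covering property; once it is available the argument is the standard best-approximation comparison, and the hardest part is merely the bookkeeping of the constants in $|Q|\sim|Q'|$ together with the verification that the vanishing-moment condition defining $p_Q(g)$ does deliver the $L^2(Q)$-minimizing property uniformly in $Q$. I do not expect either point to pose a real obstacle.
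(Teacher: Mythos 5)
Your proof is correct, but it takes a genuinely different route from the paper's. The paper deduces the proposition from Theorem A: since $\mathcal D\subset\mathcal D_0$, it only needs to bound $A_\alpha(g)$ by $\|g\|_{\Lambda_{\alpha,\mathcal D_0}}$, which it does by noting that the defining cube $Q$ of each special atom $p^L_{n,k,\alpha}$ lies in $\mathcal D_0$, using the vanishing moments to replace $g$ by $g-p_Q(g)$, and applying Cauchy--Schwarz; this route leans on the whole $H^p$ duality machinery (Theorems A and B). You instead prove the stronger pointwise inequality $M^{\sharp,2}_\alpha g\le c\,M^{\sharp,2}_{\alpha,\mathcal D_0}g$ directly, via the covering fact that any cube of side $\ell\in(2^{n-1},2^n]$ sits inside a special cube of side $2^{n+1}$ (your one-dimensional argument is right: the intervals $I'_{n,k}$ at scale $n$ start at every multiple of $2^n$ and overlap by half, which is exactly what $\mathcal D$ lacks), combined with the observation that $p_Q(g)$ is the $L^2(Q)$-orthogonal projection onto $\mathcal P_{[\alpha]}$ and hence the best approximant. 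Both observations check out, and the constant you lose is at most $4^{\alpha+N/2}$. What each approach buys: yours is elementary, self-contained, and yields a pointwise maximal-function comparison rather than merely an equivalence of $L^\infty$ norms; the paper's is essentially a two-line corollary of the framework it has already built and makes visible the role of the special atoms $p^L_{n,k,\alpha}$ in bridging adjacent dyadic cubes, which is the conceptual point of the whole article. Note also that your argument proves $\Lambda_{\alpha,\mathcal D_0}=\Lambda_\alpha$ without ever invoking $\Lambda_{\alpha,\mathcal D}$, whereas the paper's phrasing of the proposition is motivated by the decomposition $\Lambda_\alpha=\Lambda_{\alpha,\mathcal D}\cap(H^p_{\mathcal S_\alpha})^*$.
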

\begin{proof} It is obvious that
$ \|g\|_{\Lambda_{\alpha,{\mathcal D}_0}}\le
\|g\|_{\Lambda_{\alpha}}$. To show the other inequality we invoke
 Theorem A.  Since $ {\mathcal D}\subset {\mathcal
D}_0$, it suffices to estimate $A_{\alpha}(g)$, or, equivalently,
$|\langle g, p \rangle|$ for  $p\in {\mathcal S}_{\alpha}$,
$\alpha=N(1/p-1)$. So, pick $p=p^L_{n,k,\alpha}$ in ${\mathcal
S}_{\alpha}$. The defining cube $Q$ of $p^L_{n,k,\alpha}$ is in
${\mathcal D}_0$, and, since $p^L_{n,k,\alpha}$ has $[\alpha]$
vanishing moments, $\langle p^L_{n,k,\alpha},p_Q(g)\rangle=0$.
Therefore,
\begin{align*}
|\langle g, p^L_{n,k,\alpha} \rangle| &= |\langle g -p_Q(g),
p^L_{n,k,\alpha} \rangle|\\
&\le \|p^L_{n,k,\alpha}\|_2\,\|g -p_Q(g)\|_{L^2(Q)}\\
&\le |Q|^{\alpha/N}|Q|^{1/2}\|p^L_{n,k,\alpha} \|_2\,
\|g\|_{\Lambda_{\alpha,{\mathcal D}_0}}.
\end{align*}
Now,  a simple change of variables gives
$|Q|^{\alpha/N}|Q|^{1/2}\|p^L_{n,k,\alpha} \|_2\le 1$, and,
consequently,  also $A_{\alpha}(g)\le
\|g\|_{\Lambda_{\alpha,{\mathcal D}_0}}$.
 \taf
 \end{proof}

\end{document}